\documentclass[11pt,english]{article}

\usepackage[margin = 2.5cm]{geometry}

\usepackage{amsthm}
\usepackage{amsmath}
\usepackage{amssymb}
\usepackage{setspace}
\usepackage{mathtools}
\usepackage{graphicx}
\usepackage[hidelinks]{hyperref}
\usepackage{thm-restate}
\usepackage{cleveref}
\usepackage{enumitem}
\setlist[itemize]{leftmargin=*} %sets itemize indentation to 0
\usepackage{framed}
\usepackage{subcaption}
\usepackage{bbm}

\providecommand{\noopsort}[1]{}

\usepackage{floatrow}

\floatsetup{ 
  heightadjust=object,
  valign=c
}

\setlength{\parskip}{\medskipamount}
\setlength{\parindent}{0pt}

\addtolength{\intextsep}{6pt} %space between text and figures
\addtolength{\abovecaptionskip}{10pt}
\addtolength{\belowcaptionskip}{-5pt}
\captionsetup{width=0.8\textwidth, labelfont=bf, parskip=5pt}

\setstretch{1.06}

\theoremstyle{plain}

\newtheorem*{thm*}{Theorem}
\newtheorem{thm}{Theorem}
\Crefname{thm}{Theorem}{Theorems}

\newtheorem*{lem*}{Lemma}
\newtheorem{lem}[thm]{Lemma}
\Crefname{lem}{Lemma}{Lemmas}

\newtheorem*{claim*}{Claim}
\newtheorem{claim}[thm]{Claim}
\crefname{claim}{Claim}{Claims}
\Crefname{claim}{Claim}{Claims}

\newtheorem{prop}[thm]{Proposition}
\Crefname{prop}{Proposition}{Propositions}

\crefname{cor}{Corollary}{Corollaries}

\newtheorem{conj}[thm]{Conjecture}
\crefname{conj}{Conjecture}{Conjectures}

\Crefname{qn}{Question}{Questions}

\Crefname{obs}{Observation}{Observations}

\newtheorem{ex}[thm]{Example}
\Crefname{ex}{Example}{Examples}

\theoremstyle{definition}

\Crefname{prob}{Problem}{Problems}

\Crefname{defn}{Definition}{Definitions}

\theoremstyle{remark}
\newtheorem*{rem}{Remark}

\captionsetup{width=0.8\textwidth, labelfont=bf, parskip=5pt}

\newcommand{\remove}[1]{}

\newcommand{\sat}{saturated}

\newcommand{\eps}{\varepsilon}
\newcommand{\sm}{\setminus}
\renewcommand{\ss}{\subseteq}

\newcommand{\ups}[2]{[#1]^{(#2)}}
\newcommand{\crdep}{cross dependant}
\newcommand{\crsat}{cross saturated}
\newcommand{\boxpr}{\,\square\,}

\newcommand{\setcomp}[1]{\overline{#1}}		%[n] \sm #1}
\newcommand{\pwcomp}[1]{\overline{#1}}
\newcommand{\comp}[1]{#1\:\!^C}	%{\mathsf{c}}}
\newcommand{\dbcomp}[1]{\pwcomp{\comp{#1}}}

\newcommand{\A}{\mathcal{A}}
\newcommand{\B}{\mathcal{B}}
\newcommand{\C}{\mathcal{C}}

\newcommand{\F}{\mathcal{F}}
\newcommand{\G}{\mathcal{G}}

\newcommand{\R}{\mathbb{R}}

\newcommand{\ang}[1]{\langle #1 \rangle}
\newcommand{\abs}[1]{\left| #1 \right| }

\newcommand{\psn}[1]{2^{[#1]}}
\newcommand{\ps}{\psn{n}}   %power set

\newcommand*{\defeq}{\mathrel{\vcenter{\baselineskip0.5ex \lineskiplimit0pt
                     \hbox{\scriptsize.}\hbox{\scriptsize.}}}%
                     =}

\begin{document}

%\allowdisplaybreaks

\title{\vspace{-0.85cm} Minimum saturated families of sets}

\author{
	Matija Buci\'c\thanks{
	    Department of Mathematics, 
	    ETH, 
	    8092 Zurich;
	    e-mail: \texttt{matija.bucic}@\texttt{math.ethz.ch}.     
	}
	\and
    Shoham Letzter\thanks{
        ETH Institute for Theoretical Studies,
        ETH,
        8092 Zurich;
        e-mail: \texttt{shoham.letzter}@\texttt{eth-its.ethz.ch}.
        Research supported by Dr.~Max
        R\"ossler, the Walter Haefner Foundation and by the ETH Zurich Foundation.
    }
    \and
	Benny Sudakov\thanks{
	    Department of Mathematics, 
	    ETH, 
	    8092 Zurich;
	    e-mail: \texttt{benjamin.sudakov}@\texttt{math.ethz.ch}.     
		Research supported in part by SNSF grant 200021-175573.
	}
	\and
	Tuan Tran\thanks{
	    Department of Mathematics, 
	    ETH, 
	    8092 Zurich;
	    e-mail: \texttt{manh.tran}@\texttt{math.ethz.ch}. Research supported by the Humboldt Research Foundation.     
	}
}

\date{}
\maketitle

\begin{abstract}

	\setlength{\parskip}{\medskipamount}
    \setlength{\parindent}{0pt}
    \noindent

	We call a family $\F$ of subsets of $[n]$ 
	$s$-\emph{\sat} if it contains no $s$ pairwise disjoint sets, and
	moreover no set can be added to $\F$ while preserving this property (here
	$[n] = \{1,\ldots,n\}$).

	More than 40 years ago, Erd\H{o}s and Kleitman conjectured that an
	$s$-\sat{} family of subsets of $[n]$ has size at least $(1 -
	2^{-(s-1)})2^n$.  It is easy to show that every $s$-\sat{} family  has
	size at least $\frac{1}{2}\cdot 2^n$, but, as was mentioned by Frankl and
	Tokushige, even obtaining a slightly better bound of $(1/2 + \eps)2^n$,
	for some fixed $\eps > 0$, seems difficult.  In this note, we prove such a
	result, showing that every $s$-\sat{} family of subsets of $[n]$ has size
	at least $(1 - 1/s)2^n$.

	This lower bound is a consequence of a multipartite version of the
	problem, in which we seek a lower bound on $|\F_1| + \ldots + |\F_s|$ where
	$\F_1, \ldots, \F_s$ are families of subsets of $[n]$, 
	such that there are no
	$s$ pairwise disjoint sets, one from each family $\F_i$, and furthermore
	no set can be added to any of the families while preserving this
	property.  We show that $|\F_1| + \ldots + |\F_s| \ge (s-1)\cdot 2^n$,
	which is tight e.g.\ by taking $\F_1$ to be empty, and letting the
	remaining families be the families of all subsets of $[n]$.

\end{abstract}

\section{Introduction}

 \begingroup
    \renewcommand{\thefootnote}{} 
    \footnotetext{ 
    MSC2010 Classification: ~
      	Primary: 05D05 ~ Secondary: 	60C05        
    } 
    \endgroup
	
	In extremal set theory, one studies how large, or how small, a family
	$\F$ can be, if $\F$ consists of subsets of some set and satisfies
	certain restrictions.
	Let $[n] = \{1,\ldots,n\}$, let $\ps$ be the family of all subsets of
	$[n]$ and let $[n]^{(k)}$ be the family of subsets of $[n]$ of size $k$.

	A classical example in the area is the study of \emph{intersecting
	families}. We say that a family $\F$ is \emph{intersecting} if for every
	$A, B \in \F$ we have $A \cap B \neq \emptyset$.
	The following simple proposition, first noted by Erd\H{o}s, Ko and
	Rado \cite{erdos-ko-rado}, gives an upper bound on the size of an
	intersecting family in $\ps$.

	\begin{prop} \label{prop:intersecting}
		Let $\F \subseteq \ps$ be intersecting, then $|\F| \le 2^{n-1}$.
	\end{prop}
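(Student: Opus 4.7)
The plan is to use a simple pairing argument. I would partition the power set $\ps$ into $2^{n-1}$ pairs, each consisting of a set and its complement, and observe that an intersecting family can contain at most one set from each pair.

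More concretely, for each $A \ss [n]$, consider the pair $\{A, [n] \sm A\}$. Since $A \cap ([n] \sm A) = \emptyset$, these two sets are disjoint, so at most one of them can lie in $\F$. The relation $A \sim [n] \sm A$ partitions $\ps$ into exactly $2^{n-1}$ unordered pairs (note that $A \neq [n] \sm A$ always, since they have different sizes when $n$ is odd, and different parities of size when $n$ is even, so no pair is degenerate). Hence $|\F| \le 2^{n-1}$, which is the desired bound.

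The only thing to verify carefully is that the pairing genuinely covers $\ps$ with $2^{n-1}$ disjoint pairs. This follows immediately because complementation is a fixed-point-free involution on $\ps$ (indeed, $A = [n] \sm A$ would force $A$ and its complement to be equal, which is impossible as a set cannot equal its complement in $[n]$). There is no real obstacle in this proof; it is essentially a one-line argument, and the main pedagogical point is that it sets up the pairing/complementation technique that is likely to be used repeatedly in the paper when analysing $s$-saturated families, where complements and disjointness play the central role.
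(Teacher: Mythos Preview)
Your approach is exactly the paper's: pair each $A \ss [n]$ with its complement $\setcomp{A}$ and observe that an intersecting family contains at most one set from each of the $2^{n-1}$ pairs. One small slip in your parenthetical: when $n$ is even, $|A|$ and $|\setcomp{A}|$ have the \emph{same} parity (their sum is $n$), not different parities; the clean reason that $A \neq \setcomp{A}$ for $n \ge 1$ is simply that $A$ and $\setcomp{A}$ are disjoint, so equality would force $A = \emptyset = [n]$.
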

	This follows from the observation that for every set $A \ss [n]$ at most
	one of $A$ and $\setcomp{A}$ (where $\setcomp{A} \defeq [n] \sm A$) is in $\F$.
	This bound is tight, which can be seen, e.g., by taking the family of all
	subsets of $[n]$ that contain the element $1$. In fact, there are many
	more extremal examples (see \cite{erdos-hindman}), 
	partly due to the following proposition.
	\begin{prop} \label{prop:max-intersecting}
		Let $\F \subseteq \ps$ be intersecting, then there is an intersecting
		family in $\ps$ of size $2^{n-1}$ that contains $\F$. In other words,
		if $\F \ss \ps$ is a maximal intersecting family, then it has size $2^{n-1}$.
	\end{prop}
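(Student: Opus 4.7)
The plan is to argue that maximality forces $\F$ to contain exactly one set from each complementary pair $\{A, \setcomp{A}\}$. Since the power set is finite, any intersecting $\F \ss \ps$ is contained in at least one maximal intersecting family $\F^*$, so it suffices to prove that every maximal intersecting family has size exactly $2^{n-1}$.

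By \Cref{prop:intersecting} (or its short proof), at most one of $A$ and $\setcomp{A}$ lies in $\F^*$ for each $A \ss [n]$; pairing up subsets with their complements, this immediately gives $|\F^*| \le 2^{n-1}$. The main content is the matching lower bound, i.e.\ showing that \emph{at least} one of $A, \setcomp{A}$ lies in $\F^*$ for every $A \ss [n]$.

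For this step, suppose for contradiction that there exists $A \ss [n]$ with $A \notin \F^*$ and $\setcomp{A} \notin \F^*$. Maximality says that adding either one would destroy the intersecting property, so there exist $B, C \in \F^*$ with $B \cap A = \emptyset$ and $C \cap \setcomp{A} = \emptyset$. The first condition gives $B \ss \setcomp{A}$ and the second gives $C \ss A$, whence $B \cap C \ss \setcomp{A} \cap A = \emptyset$, contradicting the fact that $\F^*$ is intersecting. Hence exactly one of $A, \setcomp{A}$ lies in $\F^*$ for every pair, and $|\F^*| = 2^{n-1}$ as required.

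The argument is short and I do not anticipate a genuine obstacle; the only subtle point is making sure to quantify maximality correctly, namely extracting the two witnesses $B, C$ from the fact that neither $A$ nor $\setcomp{A}$ can be adjoined, and then observing that disjointness from complementary sets forces $B$ and $C$ themselves to be disjoint.
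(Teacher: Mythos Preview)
Your argument is correct and is essentially identical to the paper's: assume a maximal intersecting family misses both $A$ and $\setcomp{A}$, use maximality to find $B,C\in\F$ disjoint from $A$ and $\setcomp{A}$ respectively, and conclude $B\cap C=\emptyset$, a contradiction. The only difference is that you spell out the containment $B\ss\setcomp{A}$, $C\ss A$ and the reduction to maximal families explicitly, which the paper leaves implicit.
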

	Indeed, suppose that $\F$ is a maximal intersecting family of size
	less than $2^{n-1}$. Then there is a set $A \subseteq [n]$ such that $A,
	\setcomp{A} \notin \F$. By maximality of $\F$, there exist sets $B, C \in
	\F$ such that $A \cap B = \emptyset$ and $\setcomp{A} \cap C = \emptyset$.
	In particular, $B \cap C = \emptyset$, a contradiction.

	There have been numerous extensions and variations of
	\Cref{prop:intersecting}. For example, the study of $t$-intersecting
	families \cite{katona} (where the intersection of every two sets has size
	at least $t$) and $L$-intersecting families \cite{alon-babai-suzuki}
	(where the size of the intersection of every two distinct sets lies in
	some set of integers $L$). Such problems were also studied for
	$k$-uniform families, i.e.\ families that are subsets of $\ups{n}{k}$
	(see e.g.\ \cite{ahlswede-khachatrian} and \cite{ray-chaudhuri-wilson}).
	A famous example is the Erd\H{o}s-Ko-Rado \cite{erdos-ko-rado} theorem
	which states that if $\F \subseteq \ups{n}{k}$ is intersecting, and $n
	\ge 2k$, then $|\F| \le \binom{n-1}{k-1}$, a bound which is again tight
	by taking the families of all sets containing $1$.  Another interesting
	generalisation of \Cref{prop:max-intersecting} looks for the maximum
	measure of an intersecting family under the $p$-biased product measure
	(see \cite{ahlswede-katona,dinur-safra,friedgut,filmus}).  A different
	direction, which was suggested by Simonovits and S\'os
	\cite{Simonovits-sos}, studies the size of intersecting families of
	structured families, such as graphs, permutations and sets of integers
	(see e.g.\ \cite{borg,godsil-meagher}).
	
	Here we are interested in a different extension of
	\Cref{prop:intersecting,prop:max-intersecting}.
	Given $s \ge 2$, we say that a family $\F \ss \ps$ is
	\emph{$s$-\sat{}} if $\F$ contains no $s$ pairwise disjoint sets,
	and furthermore $\F$ is maximal with respect to this property.
	An example for an $s$-\sat{} family is the set of all subsets of $[n]$
	that have a non-empty intersection with $[s-1]$.
	In 1974 Erd\H{o}s and Kleitman \cite{erdos-kleitman} made the following
	conjecture, which states that this
	example is the smallest $s$-\sat{} family in $\ps$.
	\begin{conj}[Erd\H{o}s, Kleitman \cite{erdos-kleitman}]
		\label{conj:main}
		Let $\F \ss \ps$ be $s$-\sat{}. Then $|\F| \ge (1 - 2^{-(s-1)})\cdot 2^n$.
	\end{conj}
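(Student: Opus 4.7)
The plan is to exploit two structural observations: first, that every $s$-\sat{} family is automatically an up-set, and second, that the complementary down-set $\G \defeq \ps \sm \F$ admits a witness structure amenable, at least in principle, to induction.  For the first observation, suppose $A \in \F$, $A' \supseteq A$, and $A' \notin \F$.  By maximality of $\F$ there exist pairwise disjoint $B_1,\ldots,B_{s-1} \in \F$ with $B_i \cap A' = \emptyset$, and since $A \ss A'$ we also have $B_i \cap A = \emptyset$, giving $s$ pairwise disjoint members $A, B_1,\ldots,B_{s-1}$ of $\F$ --- a contradiction.  Hence $\F$ is an up-set, $\G$ is a down-set, and the conjecture is equivalent to $|\G| \le 2^{n-s+1}$.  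Moreover, for each $A \in \G$ maximality furnishes a witness $(B_1^A,\ldots,B_{s-1}^A) \in \F_{\min}^{s-1}$ of pairwise disjoint sets, each disjoint from $A$; consequently
\[
  \G \ss \bigcup_{\substack{(B_1,\ldots,B_{s-1}) \in \F_{\min}^{s-1} \\ \text{pairwise disjoint}}} 2^{[n] \sm (B_1 \cup \ldots \cup B_{s-1})}.
\]

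I would try to prove $|\G| \le 2^{n-s+1}$ by induction on $n$ with $s$ fixed.  Pick an element $x \in [n]$ and split $\G = \G_0 \sqcup \G_1$ according to whether $x \in A$.  The co-link $\F \cap 2^{[n]\sm\{x\}}$ lives on $[n]\sm\{x\}$ and its complement there is $\G_0$, while the link $\{A \sm \{x\} : A \in \F,\ x \in A\}$ lives on $[n]\sm\{x\}$ and its complement there is the trace of $\G_1$.  If $x$ can be chosen so that both the co-link and the link are $s$-\sat{} on $[n]\sm\{x\}$, the induction hypothesis yields $|\G_0|,|\G_1|\le 2^{n-s}$, and combining gives $|\G| \le 2^{n-s+1}$ as required.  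This strategy succeeds, for instance, in the extremal family $\{A : A \cap [s-1] \ne \emptyset\}$ with the choice $x \ge s$.  A parallel plan is to show that a suitable left-compression preserves both $s$-saturation and the up-set property without increasing $|\F|$; iterating compressions would reduce to the case where $\F_{\min}$ is an initial segment in colex order, where the covering inequality above might be analysed directly.

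The principal obstacle is that neither the link/co-link reduction nor left-compression is known to preserve $s$-saturation for every $\F$: already for $n=3$, $s=2$, the $2$-\sat{} family $\{\{1,2\},\{1,3\},\{2,3\},\{1,2,3\}\}$ admits no $x$ for which both the link and the co-link are $2$-\sat{} (the co-link reduces to a single $2$-set that is not maximal intersecting, while the link contains two disjoint singletons).  A substitute would have to either choose $x$ randomly and average, or replace the link/co-link by a more global reduction.  More fundamentally, the gap between the multipartite lower bound $(1-1/s)\cdot 2^n$ proved in this paper and the conjectured $(1-2^{-(s-1)})\cdot 2^n$ is exponentially large in $s$, so closing it plausibly requires a genuinely new ingredient --- for instance averaging over the $2^{s-1}$ sub-unions $\bigcup_{i\in S} B_i$ of a fixed pairwise disjoint tuple, which embeds a full Boolean sublattice into $\ps$ and would allow a recursive application of the $s=2$ base case $|\F|\ge 2^{n-1}$, or a sharpened inclusion--exclusion that properly accounts for overlaps among the covering cubes $2^{[n]\sm(B_1 \cup \ldots \cup B_{s-1})}$.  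Devising such a recursion, and controlling the coupling between its layers, is where I expect the substantive work to lie.
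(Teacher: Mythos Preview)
The statement you are attempting is \Cref{conj:main}, which the paper explicitly leaves \emph{open}: the paper proves only the weaker bound $|\F| \ge (1-1/s)\,2^n$ (\Cref{thm:main}), obtained via the multipartite \Cref{thm:cross} using either Talagrand's correlation inequality or an algebraic linear-independence argument. There is therefore no proof in the paper to compare your proposal against.

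Your write-up is not a proof but a strategy, and you yourself locate the gap: the link/co-link induction fails because $s$-saturation is not preserved under restriction, as your $n=3$, $s=2$ example correctly demonstrates (and by symmetry no choice of $x$ rescues it). The alternative suggestion of left-compression has the same defect --- compressions preserve the absence of $s$ pairwise disjoint sets but in general destroy maximality, so after compressing you need not have an $s$-\sat{} family and the induction hypothesis cannot be invoked. The averaging and inclusion--exclusion ideas in your final paragraph are reasonable heuristics, but nothing in the proposal indicates how to execute them, and any argument that closes the exponential gap between $1-1/s$ and $1-2^{-(s-1)}$ would resolve a conjecture open since 1974. In short, your preliminary observations (that $\F$ is an up-set and that $\G$ is covered by the cubes $2^{[n]\sm(B_1\cup\cdots\cup B_{s-1})}$) are correct and standard, but the proposal contains no complete argument, and the specific inductive reductions you outline provably do not work as stated.
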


	Note that by \Cref{prop:max-intersecting}, \Cref{conj:main} holds for $s =
	2$. Given a family $\F \ss \ps$, define $\comp{\F} = \ps \sm \F$, and
	$\pwcomp{\F} = \{\setcomp{A} : A \in \F\}$.  Then for every $s \ge 2$, if
	$\F \ss \ps$ is $s$-\sat{} then $\dbcomp{\F}$ is intersecting. Indeed, if
	$A \notin \F$ then $\setcomp{A}$ contains $s - 1$ pairwise disjoint
	sets of $\F$, so if $A$ and $B$ are such that $\setcomp{A}$ and
	$\setcomp{B}$ are disjoint, then at least one of $A$ and $B$ is in $\F$,
	as otherwise $\F$ contains $2(s-1) \ge s$ pairwise disjoint sets, a
	contradiction. By \Cref{prop:intersecting}, it follows that if $\F$ is
	$s$-\sat{} then $|\F| \ge 2^{n-1}$. Surprisingly, beyond this trivial
	lower bound, nothing was known. Moreover, Frankl and Tokushige
	\cite{frankl-survey} wrote in their recent survey that obtaining a lower
	bound of $(1/2 + \eps)2^n$, i.e.\ a modest improvement over the trivial
	bound, is a challenging open problem. In this paper we prove such a result.

	\begin{thm} \label{thm:main}
		Let $\F \subseteq \ps$ be $s$-\sat{}, where $s \ge 2$. Then $|\F| \ge
		(1 - 1/s)2^n$.
	\end{thm}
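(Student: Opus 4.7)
My approach is to deduce Theorem~1 from the multipartite statement previewed in the abstract: for any cross-saturated families $\F_1, \ldots, \F_s \subseteq \ps$, one has $|\F_1| + \cdots + |\F_s| \ge (s-1) \cdot 2^n$. Theorem~1 is then immediate by taking $\F_1 = \cdots = \F_s = \F$: this yields $s |\F| \ge (s-1) 2^n$, hence $|\F| \ge (1 - 1/s) 2^n$. Thus the task reduces to the multipartite inequality, which I rewrite as $\sum_i |\comp{\F_i}| \le 2^n$.

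The key structural step is the following observation. For each $A \subseteq [n]$, set $I_A = \{i : A \notin \F_i\}$ and $J_A = \{j : \setcomp{A} \notin \F_j\}$. I claim that $(|I_A|, |J_A|)$ is always one of $(0, k)$, $(k, 0)$, or $(1, 1)$; equivalently, if both $A$ and $\setcomp{A}$ are missing from some family, then each is missing from exactly one. To prove this, suppose for contradiction that $|I_A| \ge 2$ and $|J_A| \ge 1$, and pick distinct $\ell \in I_A$ and $k \in J_A$ (possible since $|I_A| \ge 2$). Saturation applied to $A \notin \F_\ell$ supplies pairwise disjoint witnesses $B_j \in \F_j$ for $j \ne \ell$, each contained in $\setcomp{A}$, while saturation applied to $\setcomp{A} \notin \F_k$ supplies pairwise disjoint witnesses $C_j \in \F_j$ for $j \ne k$, each contained in $A$. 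Replacing the $\ell$-th coordinate of the first tuple with $C_\ell$ produces an $s$-tuple with one set from each family, and the resulting sets are pairwise disjoint because the $B_j$'s live in $\setcomp{A}$ while $C_\ell$ lives in $A$, contradicting cross-dependence.

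Writing $T = \bigcap_i \F_i$, $M = \{A : |I_A| = 1\}$, and $S = \{A : |I_A| \ge 2\}$ for a partition of $\ps$, the structural lemma provides an injection $S \hookrightarrow T$ via $A \mapsto \setcomp{A}$, and since $\sum_A |I_A| = |M| + \sum_{A \in S} |I_A|$ and $|T| + |M| + |S| = 2^n$, the desired bound $\sum_A |I_A| \le 2^n$ is equivalent to the refined counting $\sum_{A \in S}(|I_A| - 1) \le |T|$. The main obstacle I antic
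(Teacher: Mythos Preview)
Your reduction to the multipartite statement is correct, and your structural lemma is both true and cleanly argued: if $\ell\in I_A$, $k\in J_A$ with $\ell\neq k$, then combining the witness tuple for $A\notin\F_\ell$ (which lives in $\setcomp{A}$) with the single set $C_\ell\subseteq A$ from the witness tuple for $\setcomp{A}\notin\F_k$ does produce a forbidden cross system. So indeed either $I_A=\emptyset$, or $J_A=\emptyset$, or $I_A=J_A$ is a singleton.

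The problem is that this lemma is too weak to reach the target, and the proposal breaks off precisely at the point where real work would be needed. Specialise to the diagonal case $\F_1=\cdots=\F_s=\F$. Then for every $A$ one has $I_A=\emptyset$ if $A\in\F$ and $I_A=[s]$ if $A\notin\F$; thus $T=\F$, $M=\emptyset$, $S=\comp{\F}$. Your injection $S\hookrightarrow T$ via complementation gives only $|\comp{\F}|\le|\F|$, i.e.\ the trivial bound $|\F|\ge 2^{n-1}$. Meanwhile the ``refined counting'' you isolate, $\sum_{A\in S}(|I_A|-1)\le|T|$, becomes $(s-1)\,|\comp{\F}|\le|\F|$, which is \emph{exactly} the inequality $|\F|\ge(1-1/s)2^n$ you set out to prove. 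So in the case that matters, the structural lemma contributes nothing beyond the trivial bound, and the residual inequality is the full theorem restated.

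For comparison, the paper does not try to control $|I_A|$ pointwise. Its first proof bounds each $|\comp{\F_i}|$ by $|\G_i|$ via Talagrand's inequality $|\A\boxpr\B|\le|\pwcomp{\A}\cap\B|$, where the $\G_i$ are shown to be pairwise disjoint subfamilies of $\ps$; summing gives $\sum_i|\comp{\F_i}|\le 2^n$ directly. The second proof assigns to each $A\in\dbcomp{\F_i}$ a polynomial $Q_{i,A}$ and shows that the full collection over all $i$ is linearly independent in a $2^n$-dimensional space. Both arguments exploit, in different ways, the \emph{ordered} decomposition of a set outside $\F_i$ into a ``left'' part (from $\F_1,\ldots,\F_{i-1}$) and a ``right'' part (from $\F_{i+1},\ldots,\F_s$); this asymmetry is what lets them separate the contributions of different indices $i$, something your symmetric complementation map cannot do.
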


	In fact, \Cref{thm:main} is a corollary of a multipartite version of the
	above problem. A sequence of $s$ families $\F_1, \ldots, \F_s \subseteq
	\ps$ is called \emph{\crdep{}} (see, e.g., \cite{frankl-kupavskii}) if
	there is no choice of $s$ sets $A_i \in \F_i$, for $i \in [s]$, such that
	$A_1, \ldots, A_s$ are pairwise disjoint.  We call a sequence	of $s$
	families $\F_1, \ldots, \F_s$ \emph{\crsat{}} if the sequence is \crdep{}
	and is maximal with respect to this property, i.e.\ the addition of any
	set to any of the families results in a sequence which is not \crdep. Our
	aim here is to obtain a lower bound on the $|\F_1| + \ldots + |\F_s|$.
	Note that if $\F$ is $s$-\sat{} then the sequence given by $\F_1 = \ldots
	= \F_s = \F$ is \crsat{}. Hence, a lower bound on the sum of sizes of a
	\crsat{} sequence of $s$ families implies a lower bound on the size of an
	$s$-\sat{} family.
	
	A simple example of a \crsat{} sequence $\F_1, \ldots, \F_s$
	can be obtained by taking $\F_1$ to be
	empty, and letting all other sets be $\ps$. 
	This construction is a special
	case of a more general family of examples which we believe
	contains all extremal examples; we discuss this in \Cref{sec:conclusion}.
	%Out 
	Our next result shows that this example is indeed a smallest example for a
	\crsat{} sequence. Furthermore, it implies \Cref{thm:main} by taking
	$\F_1 = \ldots = \F_s = \F$.
	\begin{thm} \label{thm:cross}
		Let $\F_1, \ldots, \F_s \ss \ps$ be \crsat{}. Then $|\F_1| + \ldots + |\F_s|
		\ge (s-1)2^n$.
	\end{thm}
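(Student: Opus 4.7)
The plan is to establish the equivalent bound $|\G_1| + \cdots + |\G_s| \le 2^n$, where $\G_i := \ps \sm \F_i$. First I would observe that each $\F_i$ is upward-closed: replacing $\F_i$ by its upward closure cannot introduce a cross-dependent tuple (any pairwise disjoint system using a superset of some $B \in \F_i$ can be replaced by one using $B$), so maximality forces $\F_i$ to equal its upward closure.

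Second, saturation yields the key observation. If $T \notin \F_i$, then by maximality there exist pairwise disjoint $B_j \in \F_j$ ($j \ne i$) that are all disjoint from $T$; each $B_j \ss \setcomp{T}$ and $\F_j$ is an upset, so $\setcomp{T} \in \F_j$ for every $j \ne i$. Writing $I(T) := \{i : T \in \G_i\}$, this yields: if $|I(T)| \ge 1$ then $\setcomp{T}$ lies in $\F_j$ for every $j \notin I(T)$; in particular, if $|I(T)| \ge 2$ then applying the statement to two distinct indices in $I(T)$ forces $\setcomp{T} \in \bigcap_j \F_j$.

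I would then induct on $s$. The base case $s = 2$ is immediate: the observation gives $|\G_1| \le |\F_2|$ and $|\G_2| \le |\F_1|$ via the bijection $T \mapsto \setcomp{T}$, so $|\G_1| + |\G_2| \le 2 \cdot 2^n - (|\G_1| + |\G_2|)$, yielding $|\G_1| + |\G_2| \le 2^n$. For the inductive step, if some $\F_i = \ps$ then the remaining $s - 1$ families form an $(s-1)$-\crsat{} system (the removed family can always supply $\emptyset$ as a disjoint witness), and the induction hypothesis combined with $|\F_i| = 2^n$ finishes the argument. So the crux is the sub-case where $\emptyset \notin \F_i$ for every $i$.

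The main obstacle lies in this last sub-case. Pairing $\ps$ into pairs $\{T, \setcomp{T}\}$ and using only the key observation gives the pointwise bound $|I(T)| + |I(\setcomp{T})| \le s$, yielding $\sum_i |\G_i| \le s \cdot 2^{n-1}$ --- tight for $s = 2$ but too weak for $s \ge 3$. The extra slack must come from the fact that when $|I(T)|$ is large, $\setcomp{T}$ is heavily constrained: it lies in $\bigcap_j \F_j$ and, for every $i \in I(T)$, contains $s - 1$ pairwise disjoint nonempty witnesses drawn from the other families. Turning this structural rigidity into a workable refinement --- essentially, showing that a sufficiently large fraction of $T \in \bigcap_j \F_j$ satisfies $\setcomp{T} \in \bigcap_j \F_j$ as well, thereby balancing the pairs with large combined contribution against those contributing strictly less than $2$ --- is the step where I expect the bulk of the technical work to lie.
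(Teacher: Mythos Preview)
Your setup is correct: each $\F_i$ is an upset, and the saturation observation ``$T\notin\F_i \Rightarrow \setcomp{T}\in\F_j$ for all $j\ne i$'' is valid and even yields the sharper statement that $|I(T)|\ge 2$ forces $I(\setcomp{T})=\emptyset$. The base case $s=2$ and the reduction when some $\F_i=\ps$ are also fine.

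The gap is exactly where you say it is, and it is not a technicality but the whole problem. Your complement-pairing argument only controls $|I(T)|+|I(\setcomp{T})|$ pair by pair, and the worst case $|I(T)|=s$, $|I(\setcomp{T})|=0$ is genuinely attainable (e.g.\ $T=\emptyset$ in your hard sub-case). Your proposed rescue, that a large fraction of $T\in\bigcap_j\F_j$ should also have $\setcomp{T}\in\bigcap_j\F_j$, does not follow from cross saturation: nothing in the hypothesis forces $\bigcap_j\F_j$ to be closed, or nearly closed, under complementation. More fundamentally, the only information your approach extracts from ``$T\in\G_i$'' is that $\setcomp{T}$ lies in $\bigcap_{j\ne i}\F_j$, and this is the same information for every $i\in I(T)$; it cannot distinguish the different indices and so cannot beat the $s\cdot 2^{n-1}$ bound.

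What the paper does instead is attach to each pair $(i,T)$ with $T\in\G_i$ a richer witness than just $\setcomp{T}$: namely, a specific splitting of $\setcomp{T}$ into a part covered by $\F_1,\ldots,\F_{i-1}$ and a part covered by $\F_{i+1},\ldots,\F_s$. The point is that these witnesses for different $i$ are mutually incompatible, so one can inject $\bigcup_i(\{i\}\times\G_i)$ into a set of size $2^n$. The first proof does this via Talagrand's disjoint-occurrence inequality $|\A\boxpr\B|\le|\pwcomp{\A}\cap\B|$, producing pairwise disjoint subfamilies of $\ps$; the second proof encodes each witness as a polynomial and shows the resulting families are pairwise orthogonal. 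Either way, the crucial idea absent from your plan is to use the \emph{ordered} decomposition of $\setcomp{T}$ (before index $i$ versus after index $i$) rather than just the set $\setcomp{T}$ itself.
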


	We have two different approaches to this problem, each of which can be
	used to prove \Cref{thm:cross}. As the proofs are short, and
	\Cref{conj:main} is still open, we feel that there is merit in presenting
	both proofs here in hope that they would give rise to further progress on
	\Cref{conj:main}.

	Our first approach makes use of an interesting connection to correlation
	inequalities. Let us start by defining the \emph{disjoint occurrence} of
	two families. Given subsets $A, I \ss [n]$, let 
	\begin{equation*}
		\C(I, A) = \{S \ss [n] : S \cap I = A \cap I \}.
	\end{equation*}
	The \emph{disjoint occurrence} of two families $\A, \B \ss \ps$ is defined by
	\begin{align*}
		\A \boxpr \B \defeq \{A	: \text{$\exists$ \emph{disjoint}
			sets $I,J \ss [n]$ s.t.\ $\C(I,
			A) \subset \A$ and  $\C(J, A) \subset \B$}\}.
	\end{align*}
	Note that when $\A$ and $\B$ are both increasing families (i.e.\ if $A
	\in \A$, and $A \ss B \ss [n]$ then $B \in \A$), 
	$\A \boxpr \B$ is the set of all subsets of $[n]$ which can be written as
	a disjoint union of a set from $\A$ and a set from $\B$. This notion of
	disjoint occurrence appears naturally in the study of percolation. Using it, one
	can express the probability that there are two edge-disjoint paths
	between two sets of vertices in a random subgraph, chosen uniformly at
	random, of a given graph.
		
	Van den Berg and Kesten \cite{bk} proved that $|\A \boxpr \B| \le
	|\A||\B|/2^n$ for increasing families $\A, \B \ss \ps$ and conjectured
	that this inequality should hold for general families. This was
	proved by Reimer \cite{reimer} in a ground breaking paper
	and is currently known as the van den
	Berg-Kesten-Reimer inequality.

	Disjoint occurrence is surprisingly suitable for the study of \sat{}
	families. For example, if $\F$ is $3$-\sat{} then it is easy to see that
	$\F$ is increasing, so $\F \boxpr \F$ is the family of sets that are
	disjoint unions of two sets from $\F$, which is exactly the family
	$\dbcomp{\F}$. This observation alone implies an improved
	lower bound on $|\F|$ using the van den Berg-Kesten-Reimer inequality.
	We obtain a better bound using a variant of this inequality, which was
	first observed by Talagrand \cite{talagrand}, and later played a major
	role in Reimer's proof of the van den Berg-Kesten-Reimer inequality in
	full generality. 

	Our second approach is algebraic: we define a polynomial for each set in
	a certain family related to $\F_1, \ldots, \F_s$, and show that these
	polynomials are linearly independent, thus implying that the family is
	not very large.

\section{The proof}

	Before turning to the first proof of \Cref{thm:cross}, we introduce the
	correlation inequality that we will need. We present its short proof for
	the sake of completeness.	

	\begin{lem}[Talagrand \cite{talagrand}] \label{lem:improved-bk}
		Let $\A, \B \ss \ps$ be increasing families.
		Then $\left|\A \boxpr \B\right| \le \left|\pwcomp{\A} \cap \B\right|$.
	\end{lem}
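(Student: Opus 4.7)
The plan is to prove the lemma by induction on $n$; the base case $n=0$ is immediate. For the inductive step, fix the coordinate $n$ and for any family $\F \ss \psn{n}$ introduce its link and deletion at $n$, viewed as families on ground set $[n-1]$:
\[
    \F^n = \{F \in \F : n \notin F\}, \qquad \F_n = \{F \sm \{n\} : F \in \F,\ n \in F\}.
\]
When $\F$ is increasing, so are $\F^n$ and $\F_n$, and crucially $\F^n \ss \F_n$.

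A routine case analysis on whether $n$ belongs to $A$, to $B$, or to neither in a decomposition $S = A \sqcup B$, and analogously for $\pwcomp{\A} \cap \B$, yields the identities
\begin{align*}
    (\A \boxpr \B)^n &= \A^n \boxpr \B^n, & (\A \boxpr \B)_n &= (\A_n \boxpr \B^n) \cup (\A^n \boxpr \B_n), \\
    (\pwcomp{\A} \cap \B)^n &= \pwcomp{\A_n} \cap \B^n, & (\pwcomp{\A} \cap \B)_n &= \pwcomp{\A^n} \cap \B_n,
\end{align*}
where all operations on the right-hand sides take place on ground set $[n-1]$.

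The heart of the argument is the observation
\[
    \A^n \boxpr \B^n \ss (\A_n \boxpr \B^n) \cap (\A^n \boxpr \B_n),
\]
which follows immediately from $\F^n \ss \F_n$ applied to both $\A$ and $\B$. Inclusion-exclusion then yields
\[
    |\A \boxpr \B| = |\A^n \boxpr \B^n| + |(\A_n \boxpr \B^n) \cup (\A^n \boxpr \B_n)| \le |\A_n \boxpr \B^n| + |\A^n \boxpr \B_n|,
\]
so the $\A^n \boxpr \B^n$ term cancels. Applying the inductive hypothesis to the pairs $(\A_n, \B^n)$ and $(\A^n, \B_n)$ on ground set $[n-1]$ gives $|\A \boxpr \B| \le |\pwcomp{\A_n} \cap \B^n| + |\pwcomp{\A^n} \cap \B_n| = |\pwcomp{\A} \cap \B|$, completing the induction. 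The main difficulty is setting up the right reduction: the decomposition of $(\A \boxpr \B)_n$ is a union rather than a disjoint union, and what makes the bookkeeping balance is precisely that the overlap of this union absorbs the $(\A \boxpr \B)^n$ contribution—this is exactly what the monotonicity assumption $\F^n \ss \F_n$ delivers.
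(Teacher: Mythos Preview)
Your proof is correct and follows essentially the same approach as the paper's: induction on $n$, splitting each family according to whether sets contain the element $n$, using the inclusion $\A^n \boxpr \B^n \ss (\A_n \boxpr \B^n)\cap(\A^n \boxpr \B_n)$ (which comes from monotonicity) to absorb the ``does not contain $n$'' contribution into the overlap, and then applying the induction hypothesis to the two mixed pairs. The only differences are cosmetic: you write $\F^n,\F_n$ where the paper writes $\F_0,\F_1$, and you start the induction at $n=0$ rather than $n=1$.
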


	\begin{rem}
		Before turning to the proof of \Cref{lem:improved-bk}, we remark that the
		statement of \Cref{lem:improved-bk} holds even without the assumption
		that the families $\A$ and $\B$ are increasing. Furthermore, an equivalent version of this played a major role in Reimer's proof
		\cite{reimer} of the van den Berg-Kesten-Reimer inequality.
	\end{rem}

	\begin{proof}
		We prove the statement by induction on $n$. It is easy to check 
		it for $n = 1$. 
		Let $n > 1$ and suppose that the statement holds for $n - 1$. Given a
		family $\F \ss \ps$, denote by $\F_0$ the family of sets in $\F$ that
		do not contain the element $n$, and let $\F_1 = \{A \ss [n-1] : A
		\cup \{n\} \in \F\}$.
		In particular, $\F_0 \subseteq \F_1 \ss \psn{n-1}$ when $\F$ is an increasing family.

		We have
		\begin{align*}
			\abs{\A \boxpr \B}
			& = \abs{(\A \boxpr \B)_0} + \abs{(\A \boxpr \B)_1} \\
			& = \abs{\A_0 \boxpr \B_0} + \abs{\A_1 \boxpr \B_0} + \abs{\A_0
			\boxpr \B_1}
			- \abs{(\A_1 \boxpr \B_0) \cap (\A_0 \boxpr \B_1)} \\
			& \le \abs{\A_1 \boxpr \B_0} + \abs{\A_0 \boxpr \B_1} \\
			& \le \abs{\pwcomp{\A_1} \cap \B_0} + \abs{\pwcomp{\A_0} \cap
				\B_1} \\
			& = \abs{(\pwcomp{\A} \cap \B)_0} + \abs{(\pwcomp{\A} \cap
				\B)_1} \\
			& = \abs{\pwcomp{\A} \cap \B}, 	
		\end{align*}
		where the first inequality holds because $\A_0 \boxpr \B_0 \ss
		(\A_1 \boxpr \B_0) \cap (\A_0 \boxpr \B_1)$, and the second one follows
		by induction.
	\end{proof}

	We are now ready for the first proof of \Cref{thm:cross}.
	\begin{proof} [First proof of \Cref{thm:cross}]
		Let $\F_1, \ldots, \F_s$ be \crsat{}, where $s \ge 2$.
		Note that 
		\begin{equation} \label{eqn:comp-F-i}
			\dbcomp{\F_i} = 
			\F_1 \boxpr \ldots \boxpr \F_{i-1} \boxpr \F_{i+1} \boxpr \ldots
			\boxpr \F_s.
		\end{equation}
		Indeed, for every $A \notin \F_i$, $\setcomp{A}$ contains a disjoint
		union of sets from $\F_1, \ldots, \F_{i-1}, \F_{i+1}, \ldots, \F_s$
		and, conversely, any $A \in \F_1 \boxpr \ldots \boxpr \F_{i-1} \boxpr
		\F_{i+1} \boxpr \ldots \boxpr \F_s$ cannot be in $\F_i$ by
		cross dependence.
		By \Cref{lem:improved-bk}, the following holds for every $i \ge  2$. 
		\begin{align} \label{eqn:size-comp-F-i}
		\begin{split}
			\abs{\comp{\F_i}}
			& =	\abs{\dbcomp{\F_i}} \\
			& = \abs{\F_1 \boxpr \ldots \boxpr \F_{i-1} \boxpr \F_{i+1}
				\boxpr \ldots \boxpr \F_s} \\
			& \le \abs{(\F_1 \boxpr \ldots \boxpr \F_{i-1}) \cap \pwcomp{(\F_{i+1}
				\boxpr \ldots \boxpr \F_s)}}.
			\end{split}
		\end{align}
		
		Denote $\G_1 = \comp{\F_1}$, and $\G_i = (\F_1 \boxpr \ldots \boxpr
		\F_{i-1}) \cap \pwcomp{(\F_{i+1} \boxpr \ldots \boxpr \F_s)}$ for $i
		\ge 2$.
		\begin{claim} \label{claim:G-i-disjoint}
			$\G_i \cap \G_j = \emptyset$ for $1 \le i < j \le s$.
		\end{claim}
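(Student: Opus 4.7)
The plan is to assume towards a contradiction that some $A$ lies in $\G_i \cap \G_j$ with $1 \le i < j \le s$, and to exhibit $s$ pairwise disjoint sets $D_1, \ldots, D_s$ with $D_k \in \F_k$, contradicting cross-dependence of the sequence $\F_1, \ldots, \F_s$.

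The first preparatory step is a monotonicity observation: each $\F_k$ must be an increasing family. Indeed, if $A \in \F_k$, $A \ss B$, and $B \notin \F_k$, then cross-saturation says that adding $B$ to $\F_k$ produces $s$ pairwise disjoint sets $X_m \in \F_m$ for $m \ne k$ together with $B$; replacing $B$ by $A \ss B$ then gives $s$ pairwise disjoint sets from the \emph{original} sequence, contradicting cross-dependence. Once each $\F_k$ is increasing, membership $X \in \F_1 \boxpr \cdots \boxpr \F_{\ell}$ unfolds, as noted in the paper, to a partition $X = B_1 \sqcup \cdots \sqcup B_{\ell}$ with $B_k \in \F_k$.

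If $i = 1$, then $A \in \G_1$ gives $A \notin \F_1$, but $A \in \G_j$ (with $j \ge 2$) supplies a partition $A = B_1 \sqcup \cdots \sqcup B_{j-1}$ with $B_1 \in \F_1$, so the monotonicity of $\F_1$ forces $A \in \F_1$, a contradiction. If $i \ge 2$, then $A \in \G_i$ delivers a partition $\setcomp{A} = C_{i+1} \sqcup \cdots \sqcup C_s$ with $C_k \in \F_k$ (coming from the factor $\pwcomp{(\F_{i+1} \boxpr \cdots \boxpr \F_s)}$), while $A \in \G_j$ delivers $A = B_1 \sqcup \cdots \sqcup B_{j-1}$ with $B_k \in \F_k$. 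Since $i < j$, the ranges $\{1, \ldots, j-1\}$ and $\{j, \ldots, s\}$ partition $\{1, \ldots, s\}$, so setting $D_k = B_k$ for $k \le j-1$ and $D_k = C_k$ for $k \ge j$ gives one representative from each family. These are pairwise disjoint since the $B_k$'s lie in $A$, the $C_k$'s lie in $\setcomp{A}$, and both sub-collections are internally disjoint.

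The main technical point is the monotonicity step: without it, $\boxpr$ need not unfold into genuine disjoint unions, and the combinatorial merging argument collapses. Once monotonicity is in place, the rest is careful index bookkeeping, with the strict inequality $i < j$ ensuring that concatenating the partition of $A$ (of length $j-1$) with the partition of $\setcomp{A}$ (of length $s-i$, which contains indices $\{j, \ldots, s\}$) produces exactly one representative from each of $\F_1, \ldots, \F_s$.
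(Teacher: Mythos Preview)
Your proof is correct and follows essentially the same approach as the paper's: combine the disjoint decomposition of $A$ coming from $A \in \G_j$ with the disjoint decomposition of $\setcomp{A}$ coming from $A \in \G_i$ to produce $s$ pairwise disjoint sets, one from each $\F_k$, contradicting cross-dependence. The only cosmetic difference is that the paper splits the index set at $i$ (using monotonicity to shrink the $B$-part from $\{1,\ldots,j-1\}$ down to $\{1,\ldots,i\}$) whereas you split at $j$ and simply discard the unused $C_k$'s with $i+1 \le k \le j-1$; both are valid and your explicit verification that each $\F_k$ is increasing is a welcome addition that the paper leaves implicit.
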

		\begin{proof}
			Indeed, if $i = 1$ then $\G_1 \subseteq \comp{\F_1}$ and $\G_j
			\subseteq \F_1$.  Otherwise, if $A \in \G_i \cap \G_j$ with $2\le i<j$ then $A$ is
			the disjoint union of elements from $\F_1, \ldots, \F_{j-1}$, so in
			particular (as the sets $\F_l$ are increasing) it is the disjoint
			union of elements from $\F_1, \ldots, \F_i$. Furthermore, since $i\ge 2$, $A$ is also
			the complement (with respect to $[n]$) of a disjoint union of sets in
			$\F_{i+1}, \ldots, \F_s$, i.e.\ $\setcomp{A}$ is the disjoint union
			of sets in $\F_{i+1}, \ldots, \F_s$.  But this means that $[n]$ is
			the disjoint union of sets from $\F_1, \ldots, \F_s$, a contradiction
			to the assumption that $\F_1, \ldots, \F_s$ form a \crsat{} sequence.
		\end{proof}
		
		It follows from \eqref{eqn:comp-F-i}, \eqref{eqn:size-comp-F-i} and
		\Cref{claim:G-i-disjoint} that
		\begin{align} \label{eqn:end-proof}
			\begin{split}
			|\F_1| + \ldots + |\F_s| 
			& = s\cdot 2^n - (|\comp{\F_1}| + \ldots + |\comp{\F_s}|) \\
			& \ge s \cdot 2^n - (|\G_1| + \ldots + |\G_s|) \\
			& \ge s \cdot 2^n - 2^n = (s - 1)2^n,
			\end{split}
		\end{align}
		thus completing the proof of \Cref{thm:cross}.
\end{proof}

	Our next approach is algebraic. Before presenting the proof, we introduce
	some definitions and an easy lemma.  Let $n$ be fixed and consider the
	vector space $V$ (over $\R$) of functions from $\{0,1\}^n$ to $\R$. Note
	that this is a vector space of dimension $2^n$.  Given a subset $S \ss
	[n]$, let $P_S : \{0,1\}^n \rightarrow \R$ be defined by $P_S(x) =
	\prod_{i \in S}x_i$, where $x = (x_1, \ldots, x_n)^T \in \{0,1\}^n$, and
	let %$x_S$ 
	$x_S\in \{0,1\}^n$ be defined by $(x_S)_i = 1$ if and only if $i \in S$. The
	following lemma shows that $\{P_S : S \subseteq [n]\}$ is a linearly
	independent set in $V$ (in fact, as $V$ has dimension $2^n$, it is a
	basis).
	\begin{lem} \label{lem:basis}
		The set $\{P_S: S \subseteq [n]\}$ is linearly independent in $V$.
	\end{lem}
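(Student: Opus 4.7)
The plan is to prove linear independence by the standard evaluation-at-indicator-vectors trick, which produces a triangular system. The key observation is that for any $S, T \subseteq [n]$,
\begin{equation*}
    P_S(x_T) = \prod_{i \in S} (x_T)_i = \one[S \subseteq T],
\end{equation*}
since $(x_T)_i = 1$ precisely when $i \in T$. So if we order subsets of $[n]$ by any linear extension of inclusion (for instance, by cardinality, breaking ties arbitrarily), the matrix $M$ with entries $M_{T,S} = P_S(x_T)$ is lower triangular with $1$'s on the diagonal, hence invertible.

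Concretely, I would assume a dependence $\sum_{S \subseteq [n]} c_S P_S \equiv 0$ and show, by induction on $|T|$, that $c_T = 0$ for every $T \subseteq [n]$. For the base case, evaluate at $x_\emptyset = (0,\ldots,0)$: the only surviving term is $c_\emptyset P_\emptyset(x_\emptyset) = c_\emptyset \cdot 1$, giving $c_\emptyset = 0$. For the inductive step, fix $T$ with $|T| = k \ge 1$ and assume $c_S = 0$ for all $|S| < k$; evaluating at $x_T$ gives
\begin{equation*}
    0 = \sum_{S \subseteq [n]} c_S P_S(x_T) = \sum_{S \subseteq T} c_S = c_T + \sum_{S \subsetneq T} c_S = c_T,
\end{equation*}
where the last equality uses the inductive hypothesis. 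Hence all $c_T$ vanish.

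There is no real obstacle here; the only thing to be slightly careful about is to order subsets consistently so the triangular structure is unambiguous, but since we only need invertibility of the evaluation matrix $M$, the induction on $|T|$ bypasses any ordering issue entirely.
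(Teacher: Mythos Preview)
Your proof is correct and essentially the same as the paper's: both rest on the identity $P_S(x_T)=\one[S\subseteq T]$ and then exploit the resulting triangular structure, you by upward induction on $|T|$ and the paper by taking a minimum-cardinality $T$ with $\alpha_T\neq 0$ and evaluating at $x_T$. These are two phrasings of the same argument.
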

	\begin{proof}
		Suppose that $\sum_{S \ss [n]} \alpha_S P_S = 0$, where $\alpha_S \in
		\R$, and not all $\alpha_S$'s are $0$. Let $T$ be a smallest set such
		that $\alpha_T \neq 0$. Note that $P_S(x_T) = 1$ if and only if $S
		\ss T$.  Hence
		$$
			0  = \sum_{S \ss [n]} \alpha_S P_S(x_T) 
			= \sum_{S \ss [n], |S| \le |T|} \alpha_S P_S(x_T) 
		 = \alpha_T,$$ 
		a contradiction to the assumption that $\alpha_T \neq 0$.
		It follows that $\alpha_S = 0$ for every $S \ss [n]$, i.e.\ the
		polynomials $\{P_S(x) : S \ss [n]\}$ are linearly independent, as
		required.
	\end{proof}

	We shall use the inner product on $V$ which is defined by 
	\begin{equation} \label{eqn:inner-product}
		\ang{f,g} = \sum_{x \in \{0,1\}^n} f(x)g(x).
	\end{equation}
	It is easy to check that this is indeed an inner product; in fact, it is
	the standard inner product, if functions are viewed as vectors
	indexed by $\{0,1\}^n$.

	We are now ready for the second proof of \Cref{thm:cross}.
	\begin{proof}[Second proof of \Cref{thm:cross}]
		Let $\F_1, \ldots, \F_s$ be \crsat{}, where $s \ge 2$.
		Given $i$ and $A \in \dbcomp{\F_i}$, recall that by
		\eqref{eqn:comp-F-i}, $A$ can be written as the disjoint union of
		sets from $\F_1, \ldots, \F_{i-1}, \F_{i+1}, \ldots, \F_n$.
		For every such $i$ and $A$, fix a representation 
		\begin{equation} \label{eqn:rep-A}
			A = B \cup C,
		\end{equation}
		where $B$ is a disjoint union of sets from $\F_1, \ldots, \F_{i-1}$
		and $C$ is a disjoint union of sets from $\F_{i+1}, \ldots, \F_{s}$.
		Let
		\begin{equation*}
			Q_{i, A}(x) = \prod_{j \in B} x_j \cdot \prod_{j \in C} (x_j - 1).
		\end{equation*}
		Let $W_i$ be the family of polynomials $Q_{i, A}$, where $i \in [s]$
		and $A \ss \dbcomp{\F_i}$.
		
		We shall show that the sets $W_i$ are pairwise disjoint and that $W_1
		\cup \ldots \cup W_s$ is linearly independent.
		This will follow from the following two claims, which state that each
		$W_i$ is linearly independent and that $W_i$ and $W_j$ are orthogonal
		for distinct $i$ and $j$.

		\begin{claim} \label{claim:W-i-lin-indep}
			$W_i$ is linearly independent for $i \in [s]$.
		\end{claim}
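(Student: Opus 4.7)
The plan is to expand each polynomial $Q_{i,A}$ in the monomial basis $\{P_S : S \ss [n]\}$ provided by \Cref{lem:basis}, and to exploit a triangular structure with respect to set inclusion that emerges from this expansion. Applying the identity $\prod_{j\in C}(x_j - 1) = \sum_{D\ss C}(-1)^{|C\sm D|}\prod_{j\in D}x_j$ to the fixed decomposition $A = B\cup C$ (with $B$ a disjoint union of sets from $\F_1,\ldots,\F_{i-1}$ and $C$ a disjoint union of sets from $\F_{i+1},\ldots,\F_s$) gives
$$Q_{i,A} = \sum_{D\ss C}(-1)^{|C\sm D|} P_{B\cup D}.$$
The term $D=C$ contributes $P_A$ with coefficient $+1$, and since $B$ and $C$ are disjoint, every $D\subsetneq C$ produces $P_{B\cup D}$ with $B\cup D\subsetneq B\cup C = A$; moreover, distinct $D\ss C$ yield distinct monomials $P_{B\cup D}$. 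Thus $Q_{i,A} = P_A + \sum_{S\subsetneq A}\beta_{A,S}P_S$ for some coefficients $\beta_{A,S}\in\{-1,0,1\}$.

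To deduce linear independence, I would suppose for contradiction that $\sum_{A\in\dbcomp{\F_i}}\alpha_A Q_{i,A}=0$ with not all $\alpha_A$ equal to zero, and pick $A^*$ that is maximal under set inclusion among the $A$ with $\alpha_A\neq 0$. By the triangular structure above, $P_{A^*}$ can appear in $Q_{i,A}$ only if $A^*\ss A$. The strict case $A^*\subsetneq A$ is ruled out by the maximality of $A^*$ (which forces $\alpha_A=0$), so the only contribution to the coefficient of $P_{A^*}$ in $\sum_A\alpha_A Q_{i,A}$ comes from $A=A^*$ and equals $\alpha_{A^*}$. Since $\alpha_{A^*}\neq 0$, this contradicts \Cref{lem:basis}, proving the claim.

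I do not anticipate any real obstacle: the whole argument is a standard triangularity trick in the monomial basis, and the only substantive observation is that $P_A$ is the unique maximal monomial appearing in $Q_{i,A}$, which is immediate from the disjointness of $B$ and $C$ in the fixed decomposition. The subsequent claims needed to finish the second proof (that $W_i$ and $W_j$ are orthogonal for $i\neq j$ with respect to the inner product \eqref{eqn:inner-product}, hence that $W_1\cup\ldots\cup W_s$ is a linearly independent subset of $V$ of total size $\sum_i|\dbcomp{\F_i}| = \sum_i|\comp{\F_i}|$) would then allow one to conclude the sum of these sizes is at most $\dim V = 2^n$, recovering \Cref{thm:cross}.
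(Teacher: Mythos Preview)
Your proof is correct and follows essentially the same triangularity argument as the paper: both expand $Q_{i,A}$ as $P_A + \sum_{S \subsetneq A}\beta_{A,S}P_S$ and then isolate a top monomial to derive a contradiction via \Cref{lem:basis}. The only cosmetic differences are that you give the explicit expansion $Q_{i,A}=\sum_{D\ss C}(-1)^{|C\sm D|}P_{B\cup D}$ (the paper leaves this implicit) and you choose $A^*$ maximal under inclusion rather than of largest cardinality, which works equally well.
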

		\begin{proof}
			Suppose that $\sum_{A \in \dbcomp{\F_i}} \, \alpha_A Q_{i, A}
			= 0$, where $\alpha_A \in \R$ and not all $\alpha_A$'s are $0$.
			Let $A$ be a largest set such that $\alpha_A \neq 0$.
			Note that for every $A' \in \dbcomp{\F_i}$, $Q_{i, A'}$
			can be written as 
			\begin{equation*}
				Q_{i, A'} = P_{A'} + \sum_{S \subsetneq A'} \beta_{A', S} P_S,
			\end{equation*}
			where the values of $\beta_{A',S}$ depend on the representation of
			$A'$ as in \eqref{eqn:rep-A}.
			Hence, by choice of $A$,
			\begin{align*}
				0\, & = 
				\sum_{A' \in \dbcomp{\F_i},\, |A'| \le |A|} \alpha_{A'}
				Q_{i, A'} \\
				& = \sum_{A' \in \dbcomp{\F_i},\, |A'| \le |A|} 
				\alpha_{A'}(P_{A'} + \sum_{S \subsetneq A'}\beta_{A', S} P_S) \\
				& = \alpha_A P_A + \sum_{|S| \le |A|,\, S \neq A} \gamma_S P_S,
			\end{align*}
			for some $\gamma_S \in \R$.
			However, since the $P_S$'s are linearly independent (by
			\Cref{lem:basis}), we have $\alpha_A = 0$, a contradiction.
			It follows that $W_i$ is linearly independent, as required.
		\end{proof}

		\begin{claim} \label{claim:W-i-orthogonal}
			$W_i$ and $W_j$ are orthogonal for $1 \le i < j \le s$.
		\end{claim}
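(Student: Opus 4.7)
My plan is to reduce orthogonality to a pointwise vanishing statement: show that for $i < j$ and any $Q_{i,A} \in W_i$, $Q_{j,A'} \in W_j$, the pointwise product $Q_{i,A}(x)\, Q_{j,A'}(x)$ is identically zero on $\{0,1\}^n$, which forces $\langle Q_{i,A}, Q_{j,A'}\rangle = 0$ directly from the definition of the inner product in \eqref{eqn:inner-product}. The starting observation is that $x_k(x_k - 1) = 0$ for $x_k \in \{0,1\}$, so it suffices to exhibit a single index $k \in [n]$ that contributes an $x_k$-factor to one of the two polynomials and an $(x_k-1)$-factor to the other (or even to the same one).

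Let the fixed decompositions of \eqref{eqn:rep-A} be $A = B \cup C$ and $A' = B' \cup C'$, so that $Q_{i,A}$ carries $x_k$-factors exactly for $k \in B$ and $(x_k-1)$-factors exactly for $k \in C$, and analogously for $Q_{j,A'}$ with $B', C'$. A suitable index $k$ exists unless $(B \cup B') \cap (C \cup C') = \emptyset$. Since $B \cap C = B' \cap C' = \emptyset$ automatically, the only non-trivial thing to rule out is $B' \cap C = \emptyset$ (one could also use $B \cap C' = \emptyset$ symmetrically, but I will not need it). To get a contradiction I will use the decompositions, as in \eqref{eqn:comp-F-i}, into disjoint unions $B = F_1 \sqcup \cdots \sqcup F_{i-1}$, $C = F_{i+1} \sqcup \cdots \sqcup F_s$, $B' = F'_1 \sqcup \cdots \sqcup F'_{j-1}$, $C' = F'_{j+1} \sqcup \cdots \sqcup F'_s$ with $F_l, F'_l \in \F_l$, and construct $s$ pairwise disjoint sets $G_1 \in \F_1, \ldots, G_s \in \F_s$, violating cross-dependence.

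The construction uses $i < j$ in an essential way: define $G_l = F'_l$ for $l \in \{1, \ldots, j-1\}$ and $G_l = F_l$ for $l \in \{j, \ldots, s\}$. The two index ranges partition $[s]$, and since $j - 1 \ge i$ and $j \ge i + 1$ both the $\F_i$- and $\F_j$-slots are indeed filled by legitimate sets from the decompositions. The $G_l$ for $l < j$ are pairwise disjoint (they form part of the disjoint decomposition of $A'$), the $G_l$ for $l \ge j$ are pairwise disjoint (from the decomposition of $A$), and the two blocks are mutually disjoint because they lie inside $B'$ and $C$ respectively, and $B' \cap C = \emptyset$ by assumption. The main subtlety — and where I expect the cleanest part of the argument to happen — is precisely this splicing: matching the decomposition of $A'$ on families indexed up to $j-1$ with the decomposition of $A$ on families from $j$ onward, using $i < j$ to get full coverage of $[s]$ and the single condition $B' \cap C = \emptyset$ to glue the halves into a cross-disjoint system.
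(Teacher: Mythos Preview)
Your proof is correct and follows essentially the same route as the paper: both arguments reduce to showing $B'\cap C\neq\emptyset$, and then use that any $k\in B'\cap C$ forces the pointwise product $Q_{i,A}(x)\,Q_{j,A'}(x)$ to vanish (the paper phrases this as ``if $Q_{i,A}(x)\neq 0$ then $x_k=0$ for $k\in C$, hence for some $k\in B'$''). Your splicing construction $G_l=F'_l$ for $l<j$ and $G_l=F_l$ for $l\ge j$ is exactly the detail the paper suppresses when it writes ``$B'\cap C\neq\emptyset$, because $\{\F_1,\ldots,\F_s\}$ is cross dependent''; spelling it out is helpful and your observation that $i<j$ is what makes the two index ranges cover $[s]$ is precisely the point.
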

		\begin{proof}
			Let $A \in \dbcomp{\F_i}$ and $A' \in \dbcomp{\F_j}$, where $1
			\le i < j \le s$.  Write $A = B \cup C$ and $A' = B' \cup C'$ for
			the representations as in \eqref{eqn:rep-A}.  Let $x \in
			\{0,1\}^n$.  We claim that $Q_{i, A}(x) = 0$ or $Q_{j, A'}(x) =
			0$.  Indeed, if the former does not hold, then $x_i = 1$ for $i
			\in B$ and $x_i = 0$ for $i \in C$.  Note that $B' \cap C \neq
			\emptyset$, because $\{\F_1, \ldots, \F_s\}$ is \crdep{}.  Hence,
			$x_i = 0$ for some $i \in B'$, which implies that $Q_{i, A'}(x) =
			0$, as claimed.  It easily follows that $\ang{Q_{i,A}, Q_{j, A'}}
			= 0$ (recall the definition of the inner product given in
			\eqref{eqn:inner-product}), as required.
		\end{proof}

		It follows from \Cref{claim:W-i-lin-indep,claim:W-i-orthogonal} that
		$W_1 \cup \ldots \cup W_s$ is linearly independent, hence it has size
		at most the dimension of $V$, i.e.\ at most $2^n$. But $|W_i| =
		|\comp{\F_i}|$, thus, as in \eqref{eqn:end-proof}
		\begin{equation*}
			|\F_1| + \ldots + |\F_s| \ge (s-1)2^n,
		\end{equation*}
		as desired.
	\end{proof}

\section{Conclusion} \label{sec:conclusion}
	There are two main directions for further research that we would like to
	mention here.

	The first is related to the tightness of \Cref{thm:cross}.  As mentioned
	in the introduction, the result is tight, which can be seen by taking
	$\F_1 = \emptyset$ and $\F_2 = \ldots = \F_s = \ps$.  In fact, this is a
	special case of the following class of examples: let $\F_1$ be any
	increasing family in $\ps$, let $\F_2 = \dbcomp{\F_1}$ and let $\F_3 =
	\ldots = \F_s = \ps$. Then $|\F_1|+|\F_2|=2^n$ and it is easy to check
	that any set in $\F_1$ intersect every set in $\F_2$. Therefore, every
	such example yields a \crsat{} set of smallest size. Furthermore, it is
	easy to see that these are  the only examples for which $\F_3 = \ldots =
	\F_s$. It seems plausible that these are the only possible examples (up
	to permuting the order of the families). This problem of classifying all
	extremal examples, interesting in its own right, may give a hint on how
	to further improve the lower bound of the size of $s$-\sat{} families.

	The second, and seemingly more challenging direction, is to improve on
	\Cref{thm:main}. We proved that if $\F$ is $s$-\sat{} then $|\F| \ge
	(1 - 1/s)2^n$, where the conjectured bound is $\left(1 -
	2^{-(s-1)}\right)2^n$. 
	We note that it is possible to improve the lower bound slightly, to show
	that $|\F| \ge \left(1 - 1/s + \Omega(\log n/n)\right)2^n$, by running
	the argument of the first proof more carefully in the case where $\F_1 =
	\ldots = \F_s = \F$; we omit further details.
	It would be very interesting to obtain an improvement of error term $1/s$
	to an expression exponential in $s$. We hope
	that our methods can be used to make further progress on this old
	conjecture.

	Let us mention here a general class of examples of $s$-\sat{} families
	whose size is $\left(1 - 2^{-(s-1)}\right)2^n$.  We do not know of any
	other examples of $s$-\sat{} families, and feel that it is likely that if
	the conjecture holds, then these are the only extremal examples.
	
	\begin{ex}
		Given $s \ge 2$, let $\{I_1, \ldots, I_{s-1}\}$ be a partition of
		$[n]$. For each $i \in [s-1]$, pick a maximal intersecting family
		$\F_i$ of subsets of  $I_i$; in particular, by \Cref{prop:max-intersecting},
		$|\F_i| = 2^{|I_i|-1}$. Define $\F$ as follows.
		\begin{equation*}
			\F = \{A \ss [n] : A \cap I_i \in \F_i \text{ for some $i \in
			[s-1]$.} \}
		\end{equation*}
		It is easy to check that $\F$ is $s$-\sat{} as a family of subsets of
		$[n]$ and that it has size $\left(1 - 2^{-(s-1)}\right) 2^n$.

		Note that this class of examples contains the example that was
		mentioned earlier, of the family of subsets of $[n]$ that intersect
		$[s-1]$.
	\end{ex}

	Finally, we note the following interesting phenomenon.
	\begin{prop}
		If \Cref{conj:main} holds for $s+1$, then it holds for $s$.
	\end{prop}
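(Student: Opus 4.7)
The plan is to reduce the $s$-case to the $(s+1)$-case by lifting an $s$-saturated family on $[n]$ to an $(s+1)$-saturated family on $[n+1]$, using the extra element as a ``free'' coordinate that can always be added to extend pairwise disjoint collections.

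Concretely, given an $s$-\sat{} family $\F \ss \psn{n}$, I would define
\begin{equation*}
    \F' \defeq \F \cup \{B \ss [n+1] : n+1 \in B\} \ss \psn{n+1}.
\end{equation*}
The two parts of this union are disjoint, so $|\F'| = |\F| + 2^n$. The first step is to verify that $\F'$ is $(s+1)$-\sat{}. For the ``no $s+1$ pairwise disjoint sets'' part, observe that at most one member of any pairwise disjoint subfamily of $\F'$ can contain the element $n+1$; the remaining sets would form $s$ pairwise disjoint members of $\F$, contradicting the $s$-saturation of $\F$. For maximality, take any $B \ss [n+1]$ with $B \notin \F'$; then $n+1 \notin B$ and $B \ss [n]$ with $B \notin \F$. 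By maximality of $\F$, there exist $s$ pairwise disjoint sets in $\F \cup \{B\}$ one of which is $B$; these sets all lie in $[n]$, and adjoining the singleton $\{n+1\} \in \F'$ produces $s+1$ pairwise disjoint sets in $\F' \cup \{B\}$, as required.

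The second step is to apply the hypothesis: \Cref{conj:main} for $s+1$ gives $|\F'| \ge (1 - 2^{-s})\,2^{n+1}$. Combining this with $|\F'| = |\F| + 2^n$ and rearranging,
\begin{equation*}
    |\F| \ge (1 - 2^{-s})\,2^{n+1} - 2^n
         = 2^{n+1} - 2^{n-s+1} - 2^n
         = 2^n - 2^{n-(s-1)}
         = \bigl(1 - 2^{-(s-1)}\bigr)\,2^n,
\end{equation*}
which is exactly \Cref{conj:main} for $s$.

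I do not expect a serious obstacle here: the only thing to be careful about is the maximality verification for $\F'$, where one must exhibit an \emph{explicit} additional set disjoint from the $s$ sets produced by $s$-saturation, and the singleton $\{n+1\}$ does the job precisely because the $s$ sets live inside $[n]$. Everything else is bookkeeping.
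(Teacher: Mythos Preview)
Your proposal is correct and follows exactly the same construction and argument as the paper's proof: lift $\F$ to $\F' = \F \cup \{B \ss [n+1] : n+1 \in B\}$, observe it is $(s+1)$-\sat{}, and do the arithmetic. The paper merely asserts that $\F'$ is $(s+1)$-\sat{} without justification, whereas you spell out both the ``no $s+1$ disjoint sets'' and the maximality checks carefully, so if anything your write-up is more complete.
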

	Indeed, suppose that \Cref{conj:main} holds for $s+1$, and let $\F \ss
	\ps$ be $s$-\sat{}. Define $\G \ss \psn{n+1}$ as follows.
	\begin{equation*}
		\G = \F \cup \{A \ss [n+1] : n + 1 \in A\}.
	\end{equation*}
	Note that $\G$ is $(s+1)$-\sat{} (as a subset of $\psn{n+1}$). Hence,
	by the assumption that the conjecture holds for $s+1$, we find that $|\G|
	\ge \left(1 - 2^{-s}\right)2^{n+1}$. Note also that $|\G| = |\F| + 2^n$.
	It follows that $|\F| \ge \left(1 - 2^{-s}\right)2^{n+1} - 2^n = \left(1
	- 2^{-(s-1)}\right)2^n$, as required.

	\bibliography{saturated}
	\bibliographystyle{amsplain}
\end{document}